\documentclass{article}
\usepackage[T1]{fontenc}
\usepackage{hyperref}
\hypersetup{colorlinks}
\usepackage{iftex}

\newtheorem{theorem}{Theorem}[section]
\newtheorem{lemma}[theorem]{Lemma}
\newtheorem{corollary}[theorem]{Corollary}
\newtheorem{proposition}[theorem]{Proposition}
\newtheorem{proof}[theorem]{Proof}
\newtheorem{keywords}[theorem]{Keywords}

\newtheorem{definition}[theorem]{Definition}

\title{The Source Of Primeness Of Rings}
\author{Didem Ye\c{s}il\textsuperscript{1}\thanks{Didem YEŞİL. Email: dyesil@comu.edu.tr} \,and D\.{i}dem Karalarl\i o\u{g}lu Camc\i\textsuperscript{2}\thanks{Didem Karalarlıoğlu Camcı. Email: didemk@comu.edu.tr}\thanks{%
\url{https://github.com/latex3/iftex}}}

\date{\csname ver@iftex.sty\endcsname}

\begin{document}

\maketitle
\tableofcontents

\begin{abstract}

Let $R$ be an associative ring. We define a subset $S_{R}^{a}$, where $a\in R$ of $R$ as $S_{R}^{a}=\{b\in R \mid aRb=(0)\}$. Then, the set $P_{R} = \bigcap_{a\in R} S_{R}^{a}$ call it the source of primeness of $R$. We first examine some basic properties of the subset $P_{R}$ in any ring $R$, and properties of idempotent elements, nilpotent elements, zero divisor elements and identity elements. And we investigated the properties of the elements of the set source of primeness with the help of these elements.

\end{abstract}

\begin{keywords}
	Prime ring; semiprime ring; source of primeness; source of semiprimeness
\end{keywords}

\section{Introduction}
Our main aim in this study is to describe three types of rings, which, as far as we know, have not been included in the literature before. These definitions are motivated by original existing concepts in ring theory and can be viewed as generalizations of reduced rings, domain and division rings, respectively (see Definition \ref{def1}). To define these new concepts of rings, we will first introduce a particular set, which is the subset of the ring, and which we call the source of primeness of the ring in question in the light of article \cite{Didem} . Before getting down into the subject matter, let's outline the terminology we will use throughout the article.

An element in a unitary ring with a right (\emph{resp.} left) multiplicative
inverse will be called a right (\emph{resp.} left) unit, and accordingly it
will be meant by a unit a two-sided unit. An element $a$ of a ring $R$ is
called a right (\emph{resp.} left) zero-divisor if there exists a nonzero
element $b\in R$ such that $ba=0$ (\emph{resp.} $ab=0$). An element which is
neither a left nor a right zero-divisor is called a nonzero-divisor. A ring
which has no nonzero right or left zero-divisors is called a domain, and a
ring whose nonzero elements are all units is called a division ring. An
element $a$ of a ring $R$ is called a nilpotent element of index $n$ if $n$
is the least positive integer such that $a^{n}=0$. A ring with no nonzero
nilpotent elements is called a reduced ring. An idempotent element $e=e^{2}\in R$ is called central if it commutes with every element of $R$,
that is to say $e$ is contained in the center of $R$.

Following \cite{McCoy}, we define a ring $R$ to be a prime (\emph{resp}.
semiprime) ring if the zero ideal is a prime (\emph{resp}. semiprime) ideal
of $R$. Equivalently, $R$ is called a prime ring if $aRb=(0)$ with $a,b\in R$
implies $a=0$ or $b=0$; and $R$ is called a semiprime ring if $aRa=(0)$ with 
$a\in R$ implies $a=0$. As it is well-known, each ring is isomorphic to the subdirect sum of the prime rings.

It is now convenient to introduce our main instrument what we focus our
attention on throughout the paper. For a ring $R$, we define the subset $S_{R}^{a}$ of $R$ as such;
$$S_{R}^{a}=\{b\in R \mid aRb=(0)\}.$$ 
And we call the following set as the source of primeness of $R$
$$P_{R} = \bigcap_{a\in R} S_{R}^{a}.$$
It is always a nonempty set as it contains $0$. At one extreme, $P_{R}$ may consist only of $0$ in which case we say $P_{R}$ is trivial, and at another extreme, $P_{R}$ may contain whole of $R$. The triviality of $P_{R}$ is only possible when $R$ is a prime ring. Putting these both aside, our
general concern will be substantially the cases between these two extremes.
A rigorous reader should have already noticed that $P_{R}$ is always
contained in the prime radical of $R$, since $ P_{R}\subseteq S_{R}$. 

So by looking more closely at the elements of $R-P_{R}$, we can say that the set $P_{R}$ is not that large of subset to miss out the chance to examine the structure of $R$. Let's say a few words about the name we suggested for $P_{R}$. For $P_{R}$ we prefer the name `` the source of primeness of $R$ " 
because each element in $R-S_{R}$ behaves like a nonzero element does in any prime ring: $RRb\neq(0)$ for every $b\in R-P_{R}$, explains where the `` primeness " part comes from.

In section $2$, we investigate basic algebraic properties of $P_{R}$ for any
ring $R$, and most of the results in this section will be of elementary
type. For instance, we shall show that $P_{R}$ is an ideal of $R$. Another result
worth mentioning here is that the source of semiprimeness is preserved under
ring isomorphisms.

\section{Results}
\begin{definition}\label{def1}
	Let $R$ be a ring, $\emptyset\neq A\subseteq R$ and $a\in R$. We define $S_{R}^{a}(A)$ as follows:
	$$S_{R}^{a}(A)= \{b\in R \,\,\mid \,\, aAb=(0)\}. $$	
	$P_{R}(A)= \bigcap_{a\in R} S_{R}^{a}(A)$ is called the source of primeness of the subset $A$ in $R$. We write $S_{R}^{a}$ instead of  $S_{R}^{a}(R)$. In particular, we can similarly define the source of primeness of the semigroup $R$ as follows: $$P_{R} = \bigcap_{a\in R} S_{R}^{a}.$$
\end{definition}
First, let's talk about some inferences that are easy to see but will help understand the set.
\begin{enumerate}
	\item Since $R$ be a ring, we obtain $aA0=(0)$ for all $a\in R$.  From this fact $S_{R}^{a}(A)\neq \emptyset$, $\forall a\in R$.Hence $P_{R}=\bigcap_{a\in R} S_{R}^{a}\neq \emptyset$.
	\item  $S_{R}^{0}(A) =R$.
	\item $S_{A}^{a} \subseteq  S_{R}^{a}(A)$. If $b\in S_{A}^{a}$, then $b\in A$ such that $aAb=(0)$. Since $A\subseteq R$, we have $b\in R$ and $aAb=(0)$. This means that $b\in S_{R}^{a}(A)$.
\end{enumerate}
If $x\in P_{R}(A)$, then $aAx=(0)$, for all  $a\in R$. Hence, $RAx=(0)$. Therefore, $P_{R}(A)= \{x\in R \,\,:\,\, RAx=(0)\}.$
\begin{theorem}
	Let $R$ be a ring and $\emptyset\neq A, B\subseteq R$. Then, $P_{R\times R}(A\times B)=  P_{R}(A)\times P_{R}(B)$.
	
\end{theorem}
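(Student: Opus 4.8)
The plan is to prove the set equality by showing mutual containment, using the characterization established just before the theorem statement: for any ring $R'$ and subset $A'$, we have $P_{R'}(A') = \{x \in R' \mid R'A'x = (0)\}$. Applied to the product ring $R \times R$ with the subset $A \times B$, this says $P_{R\times R}(A\times B) = \{(x,y) \in R\times R \mid (R\times R)(A\times B)(x,y) = (0,0)\}$. The key observation is that multiplication in $R \times R$ is componentwise, so the condition $(R\times R)(A\times B)(x,y) = (0,0)$ unpacks into the pair of conditions $RAx = (0)$ and $RBy = (0)$.

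First I would fix an element $(x,y) \in R \times R$ and write out what membership in $P_{R\times R}(A\times B)$ means. By the displayed characterization, $(x,y) \in P_{R\times R}(A\times B)$ iff $(r_1,r_2)(a,b)(x,y) = (0,0)$ for all $(r_1,r_2) \in R\times R$ and all $(a,b) \in A\times B$. Since the product in $R\times R$ is taken coordinatewise, $(r_1,r_2)(a,b)(x,y) = (r_1ax, r_2by)$, so this holds iff $r_1 a x = 0$ for all $r_1 \in R, a \in A$ and $r_2 b y = 0$ for all $r_2 \in R, b \in B$; equivalently $RAx = (0)$ and $RBy = (0)$. The only mild subtlety here is making sure that the universal quantifiers over the two coordinates genuinely decouple: since $r_1, a$ range over all of $R, A$ independently of $r_2, b$, the conjunction of the two scalar equations over all choices is exactly equivalent to the single product equation over all choices — I would state this explicitly but it is routine.

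Then I would invoke the characterization again, now in each factor: $RAx = (0)$ means $x \in P_R(A)$, and $RBy = (0)$ means $y \in P_R(B)$. Combining, $(x,y) \in P_{R\times R}(A\times B)$ iff $x \in P_R(A)$ and $y \in P_R(B)$, i.e. iff $(x,y) \in P_R(A) \times P_R(B)$. Since $(x,y)$ was arbitrary, this gives the desired equality of sets.

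I do not anticipate a genuine obstacle here; the result is essentially bookkeeping once the pre-theorem characterization $P_{R}(A) = \{x : RAx = (0)\}$ is in hand. The one point requiring a little care — and the closest thing to a ``main step'' — is the clean decoupling of the two coordinates in the quantifier ``for all $(r_1,r_2)$, for all $(a,b)$'': one should note that $A \times B$ being a full rectangular product (rather than an arbitrary subset of $R \times R$) is what makes the factorization work, and that the same rectangular structure of $R \times R$ itself ensures the annihilator condition splits. If one wanted to be thorough one could also remark that $A \times B \neq \emptyset$ follows from $A \neq \emptyset$ and $B \neq \emptyset$, so that all the objects $P_{R\times R}(A\times B)$, $P_R(A)$, $P_R(B)$ are defined.
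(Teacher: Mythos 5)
Your proposal is correct and follows essentially the same route as the paper's own proof: both apply the characterization $P_{R}(A)=\{x\in R \mid RAx=(0)\}$ to the product ring and decouple the componentwise condition $(r_1ax,r_2by)=(0,0)$ into $RAx=(0)$ and $RBy=(0)$. Your explicit remark about the quantifiers decoupling is a point the paper glosses over, but the argument is the same.
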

\begin{proof}
	$P_{R\times R}(A\times B) = \{(x,y)\in R\times R \mid (R\times R)(A\times B)(x,y)= (0,0)\}.$
	\begin{eqnarray}
		(x,y)\in P_{R\times R}(A\times B)
		&\iff& (R\times R)(A\times B)(x,y)= (0,0)\\
		&\iff& (r_{1},r_{2})(a,b)(x,y)= (0,0)\\
		&\iff& (r_{1}ax,r_{2}by)=(0,0),\,\, \forall r_{1},r_{2}\in R, \,\, \forall a\in A \,\, \forall b\in B\\
		&\iff& (RAx)=(0), \,\, (RBy)=(0)\\
		&\iff& x\in P_{R}(A), \,\, y\in P_{R}(B)\\
		&\iff& (x,y)\in P_{R}(A)\times P_{R}(B)
	\end{eqnarray}
	
	Namely, $P_{R\times R}(A\times B)=  P_{R}(A)\times P_{R}(B)$.
\end{proof}
\begin{theorem}
	If $1_{R}\in R$ and $R$ is commutative, then $P_{R}\subseteq \{x\in R \mid x^{2}=0\}.$
\end{theorem}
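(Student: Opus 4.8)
The plan is to unwind the definition of $P_R$ and then exploit the identity element. As noted just before the statement, $P_R = P_R(R) = \{x \in R \mid RRx = (0)\}$; equivalently, $x \in P_R$ means $aRx = (0)$ for \emph{every} $a \in R$. Since the defining intersection $P_R = \bigcap_{a \in R} S_R^a$ runs over all $a \in R$, in particular $P_R \subseteq S_R^x$ for the specific choice $a = x$, and this specialization is the crux of the argument.

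So first I would fix $x \in P_R$ and use $x \in S_R^x$ to get $xRx = (0)$. Then, since $1_R \in R$, I would take the middle ring element to be $1_R$, obtaining $x^2 = x\,1_R\,x \in xRx = (0)$, hence $x^2 = 0$; that is exactly $x \in \{x \in R \mid x^2 = 0\}$, which gives the claimed containment. If one prefers to visibly invoke the commutativity hypothesis, the same computation can be phrased as $x^2 \in Rx^2 = xRx = (0)$ using that $R$ is commutative, but strictly speaking commutativity is not needed to conclude $x^2 = 0$ once $1_R$ is available.

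I expect no real obstacle here: the only point requiring care is to remember that membership in $P_R$ yields the annihilation $aRx = (0)$ for \emph{all} $a \in R$, so that plugging in $a = x$ is legitimate; after that the identity element does all the work. It is worth remarking (though not needed for the theorem) that specializing instead to $a = 1_R$ together with middle element $1_R$ gives $x = 1_R 1_R x = 0$, so $P_R = (0)$ whenever $R$ is unital — a strictly stronger conclusion of which this theorem is an immediate corollary.
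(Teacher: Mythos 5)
Your proof is correct and takes essentially the same route as the paper: specialize the defining intersection to $a=x$ to get $xRx=(0)$, then insert $1_{R}$ as the middle factor to conclude $x^{2}=0$ (the paper, like you, needs commutativity only decoratively). Your closing observation that taking $a=1_{R}$ already forces $P_{R}=\{0\}$ for any unital ring is also correct and is in fact proved later in the paper as part of Lemma \ref{lem5}.
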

\begin{proof}
	Let $K=\{x\in R \mid x^{2}=0\}.$
	
	If $x\in P_{R}$, then $RRx=(0)$. We have $xRx=(0)$ for $x\in R$. Since $R$ is commutative and $1_{R}\in R$, $x\in K$ is satisfied. So, $P_{R}\subseteq K$. 
	
\end{proof}
\begin{lemma}
	If $R$ is a prime ring, then $P_{R} =\{0\}.$
\end{lemma}
\begin{proof}
	If $x\in P_{R}$, then $RRx=(0)$. Since $R$ is a prime ring, $x=0$. Hence, $P_{R} =\{0\}.$
\end{proof}

\begin{lemma}
	Let $R$ be a ring and $A,B$ are nonempty subsets of $R$. Then the following conditions hold:
	\begin{enumerate}
		\item If $A\subseteq B$, then $P_{R}(B) \subseteq P_{R}(A)$. In particular, $P_{R}\subseteq P_{R}(A)$.
		\item If $A$ is subring of $R$, then $A\cap P_{R}(A)\subseteq P_{A}$.
	\end{enumerate}
\end{lemma}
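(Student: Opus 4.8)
The plan is to unwind the characterization $P_{R}(A)=\{x\in R \mid RAx=(0)\}$ established just before the statement, and in part (2) to combine it with the analogous characterization of $P_{A}$ inside the subring $A$, namely $P_{A}=\{x\in A \mid AAx=(0)\}$. Both parts should be short containment arguments.

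For part (1), I would take $x\in P_{R}(B)$, so that $RBx=(0)$. Since $A\subseteq B$, every product $rax$ with $r\in R$, $a\in A$ is among the products $rbx$ with $b\in B$, hence $RAx\subseteq RBx=(0)$; therefore $x\in P_{R}(A)$. This gives $P_{R}(B)\subseteq P_{R}(A)$. The ``in particular'' claim is the special case $B=R$, using that $P_{R}=P_{R}(R)$ and $A\subseteq R$.

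For part (2), I would take $x\in A\cap P_{R}(A)$. From $x\in P_{R}(A)$ we get $RAx=(0)$, and since $A\subseteq R$ this yields $AAx\subseteq RAx=(0)$. Because $A$ is a subring, the products $AAx$ all lie in $A$, and $x\in A$, so by the characterization of $P_{A}$ applied within the ring $A$ we conclude $x\in P_{A}$. Hence $A\cap P_{R}(A)\subseteq P_{A}$.

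The only genuine subtlety — and the step I would be most careful about — is the passage in part (2) from $RAx=(0)$ to $AAx=(0)$: one must make sure it is $A$ that appears in the middle slot in the definition of $P_{A}$ (i.e. $P_{A}=P_{A}(A)$), so that shrinking the outer factor from $R$ to $A$ while keeping the inner factor $A$ is exactly what is needed; there is no need to shrink the inner factor, and the intersection with $A$ is precisely what guarantees $x\in A$ as required by the definition of $P_{A}$. Everything else is a routine inclusion of product sets, so no further obstacle is expected.
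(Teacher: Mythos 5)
Your proposal is correct and follows essentially the same route as the paper's proof: part (1) shrinks the middle factor using $A\subseteq B$, and part (2) shrinks the outer factor from $R$ to $A$ while using $x\in A$ to land inside $P_{A}$; the only cosmetic difference is that you work with the set-level characterization $P_{R}(A)=\{x\in R\mid RAx=(0)\}$ rather than the elementwise condition $aAx=(0)$ for all $a$. The subtlety you flag in part (2) is exactly the step the paper also takes care of, so there is nothing to add.
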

\begin{proof}
	\begin{enumerate}
		\item Let $x\in P_{S}(B)$. We have $x\in \bigcap_{a\in S} S_{S}^{a}(B)$ and $aBx=(0)$, $\forall a\in S$ follows from the definition. Since $A\subseteq B$, we write $aAx=(0)$ for all $a\in R$. This means that $x\in S_{R}^{a}(A)$ for all $a\in R$. Hence we get $x\in \bigcap_{a\in R}S_{R}^{a}(A)$ and $x\in P_{R}(A)$. This gives up $P_{R}(B)\subseteq P_{R}(A)$. Specially $P_{R}\subseteq P_{R}(A)$ is satisfied for $A\subseteq R$.
		\item Let $x\in A\cap P_{R}(A)$. Then, $x\in A$ and $x\in P_{R}(A)$. Hence we get $x\in A$ and $x\in\bigcap_{a\in  R} S_{R}^{a}(A)$, and so, $x\in S_{R}^{a}(A)$ for all $a \in R$. Using $x\in A$, $x\in S_{A}^{a}$ for all $a\in A$. This expression gives us $x\in \bigcap_{a\in R}S_{A}^{a}= P_{A}$. So, we obtain $A\cap P_{R}(A)\subseteq P_{A}$.
	\end{enumerate}
\end{proof}
It is well known that every prime ring is a semiprime ring. Now let's look at the relationship between the source of primeness and the source of semiprimeness.
\begin{theorem}
	Let $R$ be a ring, $\emptyset\neq A\subseteq R$ and $a\in R$. Then $P_{R}(A)\subseteq S_{R}(A)$.
\end{theorem}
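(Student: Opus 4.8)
The plan is to reduce the inclusion to two facts already at hand: the characterization $P_{R}(A)=\{x\in R\mid RAx=(0)\}$ recorded just above the theorem, and the defining property of the source of semiprimeness, namely that $x\in S_{R}(A)$ exactly when $xAx=(0)$.

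First I would take an arbitrary $x\in P_{R}(A)$ and unpack the membership: it says $RAx=(0)$, i.e. $rax=0$ for every $r\in R$ and every $a\in A$. The key step --- and really the only idea needed --- is to notice that $x$ is itself an element of $R$, hence a legitimate choice for the multiplier $r$. Specialising $r=x$ gives $xax=0$ for all $a\in A$, that is, $xAx=(0)$, which is precisely the condition $x\in S_{R}(A)$. As $x\in P_{R}(A)$ was arbitrary, this establishes $P_{R}(A)\subseteq S_{R}(A)$.

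I would also mention an equivalent phrasing that stays entirely at the level of the intersection defining $P_{R}(A)$: for any $x\in P_{R}(A)$ we have $x\in R$, so the trivial inclusion $P_{R}(A)=\bigcap_{a\in R}S_{R}^{a}(A)\subseteq S_{R}^{x}(A)$ holds; applied to the element $x$ itself it yields $x\in S_{R}^{x}(A)$, and since $S_{R}^{x}(A)=\{b\in R\mid xAb=(0)\}$ this is again $xAx=(0)$.

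I do not expect any genuine obstacle: the statement is elementary, and its whole content is the bookkeeping remark that the ``outer'' element of $P_{R}(A)$ may be reused as one of the ring multipliers --- exactly the passage from a primeness-type condition $aRb=(0)$ to a semiprimeness-type condition $aRa=(0)$. The only point that needs attention is that the argument relies on the description of $S_{R}(A)$ as $\{c\in R\mid cAc=(0)\}$, with $c$ ranging over all of $R$; under a more restrictive domain for $c$ the inclusion could fail.
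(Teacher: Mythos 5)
Your proof is correct and is essentially the paper's own argument: the paper likewise observes that an element $b\in P_{R}(A)=\bigcap_{a\in R}S_{R}^{a}(A)$ lies in particular in $S_{R}^{b}(A)$, giving $bAb=(0)$ and hence $b\in S_{R}(A)$. Your second formulation is word-for-word the paper's proof, and your caveat about the intended description of $S_{R}(A)$ as $\{c\in R\mid cAc=(0)\}$ matches how the paper uses that set.
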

\begin{proof}
	If $b\in P_{R}(A)$, then $b\in\bigcap_{a\in R} S_{R}^{a}$. 
	In particular, $b\in S_{R}^{b}.$ 
	Therefore, $bAb=(0).$ Hence, $b\in S_{R}(A).$ 
\end{proof}
\begin{proposition}\label{prop2}
	Let $R$ be a ring, $a\in R$ and $I$ be a nonempty subset of $R$. In this case, the following features are provided.
	\begin{enumerate}
		\item $S_{R}^{a}(I)$ is a right ideal of $R$.
		\item If $I$ is a right ideal of $R$, then $S_{R}^{a}(I)$ is a left ideal of $R$.
		\item If $I$ is a right ideal of $R$, then $S_{R}^{a}(I)$ is a ideal of $R$. 
	\end{enumerate}
\end{proposition}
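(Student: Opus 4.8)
The plan is to verify the ideal axioms directly from the definition $S_{R}^{a}(I)=\{b\in R \mid aIb=(0)\}$, handling the three parts in order and reusing the first two to get the third. Throughout I would read a product of sets such as $aIb$ elementwise: $aIb=(0)$ means precisely that $axb=0$ for every $x\in I$.

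First I would record that $S_{R}^{a}(I)$ is an additive subgroup of $R$: it is nonempty since $aI0=(0)$, and if $b_{1},b_{2}\in S_{R}^{a}(I)$ then for each $x\in I$ we have $ax(b_{1}-b_{2})=axb_{1}-axb_{2}=0$, so $b_{1}-b_{2}\in S_{R}^{a}(I)$. For part (1), take $b\in S_{R}^{a}(I)$ and $r\in R$; then for every $x\in I$, $ax(br)=(axb)r=0\cdot r=0$ by associativity, so $br\in S_{R}^{a}(I)$. Hence $S_{R}^{a}(I)$ is a right ideal of $R$.

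For part (2), assume additionally that $I$ is a right ideal of $R$. Given $b\in S_{R}^{a}(I)$ and $r\in R$, I would show $rb\in S_{R}^{a}(I)$: for each $x\in I$ we have $xr\in I$ because $I$ is a right ideal, and therefore $ax(rb)=a(xr)b=0$, using that $xr\in I$ and $b\in S_{R}^{a}(I)$. Thus $aI(rb)=(0)$, so $rb\in S_{R}^{a}(I)$, which shows $S_{R}^{a}(I)$ is a left ideal. Part (3) then follows immediately: when $I$ is a right ideal, parts (1) and (2) together make $S_{R}^{a}(I)$ both a right and a left ideal, hence a two-sided ideal of $R$.

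I do not expect a genuine obstacle here; the only delicate point is the bookkeeping with the set products $aIb$, which must be interpreted elementwise so that associativity of multiplication and the right-ideal hypothesis on $I$ apply cleanly. It is worth remarking that the hypothesis in (2) is essential: without it $xr$ need not lie in $I$, and the left-ideal conclusion can fail.
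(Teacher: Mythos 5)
Your proof is correct and follows essentially the same route as the paper's: closure under subtraction and right multiplication by associativity for part (1), the observation that $Ir\subseteq I$ turns $aI(rb)$ into a subset of $aIb=(0)$ for part (2), and the combination of the two for part (3). The only difference is cosmetic — you unpack the set products $aIb$ elementwise where the paper manipulates them as sets (e.g.\ $aI(rx)=a(Ir)x\subseteq aIx$) — which if anything makes the argument slightly more careful.
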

\begin{proof}
	\begin{enumerate}
		\item \label{1} Let $x,y\in S_{R}^{a}(I)$. Then, $aIx=aIy=(0)$ for $a\in R$. From here $aI(x-y)=aIx-aIy=(0)$, we obtain $x-y\in S_{R}^{a}(I)$. Besides that, we have $aI(xr)=(aIx)r=(0)$ for $r\in R$. Thus, we get $xr\in S_{R}^{a}(I)$. Hereby, $S_{R}^{a}(I)$ is a right ideal of $R$.
		\item \label{2} Let $I$ is a right ideal of $R$ and $x\in S_{R}^{a}(I)$, $r\in R$. Then, we get $aI(rx)= a(Ir)x\subseteq aIx=(0)$. Hence, we have $rx\in S_{R}^{a}(I)$ and $S_{R}^{a}(I)$ is a left ideal of $R$.
		\item We can easily see that if $I$ is a right ideal of $R$, then $S_{R}^{a}(I)$ is an ideal of $R$ from \ref{1} and \ref{2}. 
	\end{enumerate}
\end{proof}
\begin{lemma}
	Let $R$ be a ring and $I$ be a nonempty subset of $R$. If $I$ is a right ideal of $R$, then $P_{R}(I)$ is an ideal of $R$. 
\end{lemma}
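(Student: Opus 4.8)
The plan is to obtain this as an immediate consequence of Proposition \ref{prop2}. By definition $P_{R}(I)=\bigcap_{a\in R}S_{R}^{a}(I)$ is the intersection of the family $\{S_{R}^{a}(I):a\in R\}$ of subsets of $R$. Since $I$ is assumed to be a right ideal of $R$, part (3) of Proposition \ref{prop2} says that for \emph{each} fixed $a\in R$ the set $S_{R}^{a}(I)$ is a two-sided ideal of $R$. So $P_{R}(I)$ is an arbitrary intersection of two-sided ideals, and it only remains to invoke the standard fact that such an intersection is again an ideal.

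Concretely I would argue as follows. If $x,y\in P_{R}(I)$ then $x,y\in S_{R}^{a}(I)$ for every $a\in R$; since each $S_{R}^{a}(I)$ is closed under subtraction (it is in particular a right ideal by Proposition \ref{prop2}(1)), $x-y\in S_{R}^{a}(I)$ for every $a$, hence $x-y\in P_{R}(I)$. If $x\in P_{R}(I)$ and $r\in R$, then for every $a\in R$ we have $xr\in S_{R}^{a}(I)$ because $S_{R}^{a}(I)$ is a right ideal, and $rx\in S_{R}^{a}(I)$ because $S_{R}^{a}(I)$ is a left ideal (here Proposition \ref{prop2}(2) is used, which is exactly where the hypothesis that $I$ is a right ideal is needed). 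Taking the intersection over all $a$ gives $xr,rx\in P_{R}(I)$. Hence $P_{R}(I)$ is an ideal of $R$.

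Alternatively, one can give a self-contained verification using the description $P_{R}(I)=\{x\in R\mid RIx=(0)\}$ recorded after Definition \ref{def1}: for $x,y\in P_{R}(I)$ one has $RI(x-y)=RIx-RIy=(0)$; for $r\in R$ one has $RI(xr)=(RIx)r=(0)$, and, using that $Ir\subseteq I$ since $I$ is a right ideal, $RI(rx)=R(Ir)x\subseteq RIx=(0)$. Either way the content is routine.

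There is really no serious obstacle here; the single point that requires attention is that the right-ideal hypothesis on $I$ is genuinely used — and used only — in the left-absorption step (showing $rx\in P_{R}(I)$), mirroring the role it plays in Proposition \ref{prop2}(2). Without that hypothesis the same reasoning would still show $P_{R}(I)$ is a right ideal, but not necessarily a two-sided one.
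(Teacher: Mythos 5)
Your proposal is correct and is essentially the paper's own argument: both deduce from Proposition \ref{prop2} that each $S_{R}^{a}(I)$ is a two-sided ideal when $I$ is a right ideal, and then observe that $P_{R}(I)=\bigcap_{a\in R}S_{R}^{a}(I)$ is an intersection of ideals and hence an ideal. Your remark pinpointing where the right-ideal hypothesis is used, and the alternative direct verification via $RIx=(0)$, are accurate but add nothing beyond the paper's route.
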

\begin{proof}
	Let $I$ be a right ideal of $R$ and $x,y \in P_{R}(I)$. Therefore, $x,y\in \bigcap_{a\in R}S_{R}^{a}(I)$ and $x,y\in S_{R}^{a}(I)$ for all $a\in R$. Then, $aIx=aIy=(0)$ for all $a\in R$. Since $S_{R}^{a}(I)$ is an ideal of $R$  for all $a\in R$ from Proposition \ref{prop2}, we write $x-y, xr, rx \in S_{R}^{a}(I)$ for all $a,r\in R$. Consequently, we get $x-y, xr, rx \in \bigcap_{a\in R}S_{R}^{a}(I) =P_{R}(I)$. For this reason, $P_{R}(I)$ is an ideal of $R$.
\end{proof}
Let us now examine the elements of set of the source of primeness $P_{R}(A)$ given as the intersection of sets $S_{R}^{a}(A)$. For this, let's consider the set $B=\{b\in R \mid aAb=(0), \,\,\,\forall a\in R\}$. If $b\in B$, then $aAb=(0)$ for all $a\in R$. Hence we get $b\in S_{R}^{a}(A)$ for all $a\in R$. This gives us $b\in P_{R}(A)$. Conversely, if $x\in P_{R}(A)$, then $aAx=(0)$ for all $a\in R$. So, we obtain $x\in B$. 

In the following lemma, if the ring $R$ is prime, its relation to the set the source of primeness examined.
\begin{lemma}
	Let $R$ be a ring. Thus the followings are provided.
	\begin{enumerate}
		\item If $R$ is prime ring, then $P_{R}=\{0\}$.
		\item The source of primeness $P_{R}$ is contained by every prime ideal of the $R$. 
	\end{enumerate}
\end{lemma}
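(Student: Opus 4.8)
The plan is to prove both parts directly from the characterization $P_R = \{x \in R \mid RRx = (0)\}$ established in the excerpt (and already used in the earlier lemma ``If $R$ is a prime ring, then $P_R = \{0\}$''). Part (1) is essentially a restatement of that earlier lemma, so I would simply invoke it: if $x \in P_R$ then $RRx = (0)$, and primeness of $R$ (applied with the pair $R \ni r$ and $x$, or directly in the form $aRb = (0) \Rightarrow a = 0$ or $b = 0$) forces $x = 0$; hence $P_R = \{0\}$. One small point worth spelling out: from $RRx = (0)$ one gets $rRx = (0)$ for every $r \in R$, so taking any $r \neq 0$ (possible unless $R = 0$, in which case the claim is trivial) gives $x = 0$ by the definition of prime ring.

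For part (2), let $\mathfrak{p}$ be an arbitrary prime ideal of $R$; I want to show $P_R \subseteq \mathfrak{p}$. Take $x \in P_R$, so $RRx = (0) \subseteq \mathfrak{p}$, i.e. $RxR \subseteq \mathfrak{p}$ after noting $Rx R \subseteq RRx R$... — more carefully, the clean route is to use the quotient ring $R/\mathfrak{p}$, which is a prime ring by definition. Let $\bar{\phantom{x}}\colon R \to R/\mathfrak{p}$ be the canonical projection. From $RRx = (0)$ we get $\bar R \bar R \bar x = \bar 0$ in $R/\mathfrak{p}$, so $\bar x \in P_{R/\mathfrak{p}}$. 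By part (1), $P_{R/\mathfrak{p}} = \{0\}$, hence $\bar x = \bar 0$, i.e. $x \in \mathfrak{p}$. Since $\mathfrak{p}$ was arbitrary, $P_R$ is contained in every prime ideal of $R$.

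I expect no real obstacle here; the only thing to be a little careful about is the edge case $R = 0$ (or $R$ having no nonzero elements to multiply by) in part (1), and making sure the passage to the quotient in part (2) is legitimate — but since $\mathfrak{p}$ is a two-sided ideal and $R/\mathfrak{p}$ is a ring with the induced multiplication, $RRx \subseteq \mathfrak{p}$ immediately yields $\bar R\,\bar R\,\bar x = \bar 0$, and the prime-ring hypothesis on $R/\mathfrak{p}$ does the rest. An alternative to invoking the quotient is the purely ideal-theoretic argument: $RxR \subseteq \mathfrak{p}$ and $\mathfrak{p}$ prime force $x \in \mathfrak{p}$ directly (using that $(x)(x) \subseteq RxR \subseteq \mathfrak{p}$ together with primeness, or that the principal ideal generated by $x$ is nilpotent modulo $\mathfrak{p}$); either phrasing works, and I would present whichever is shortest given the conventions already fixed in the paper.
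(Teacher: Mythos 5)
Your proof is correct. Part (1) is essentially identical to the paper's: from $x\in P_R$ one has $RRx=(0)$, and primeness forces $x=0$; your extra remark about the degenerate case $R=0$ is harmless. For part (2) you take a genuinely different (though logically equivalent) route: the paper argues directly from the element-wise definition of a prime ideal, namely $RRx=(0)\subseteq P$ gives $aRx\subseteq P$ for every $a$, and since $P$ is proper one may choose $a\notin P$, whence $x\in P$; you instead pass to the quotient $R/\mathfrak{p}$, observe that $\bar R\,\bar R\,\bar x=\bar 0$ so $\bar x\in P_{R/\mathfrak{p}}$, and invoke part (1) for the prime ring $R/\mathfrak{p}$. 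The quotient version has the virtue of making explicit that (2) is a formal consequence of (1), at the cost of needing the (standard) fact that $R/\mathfrak{p}$ is a prime ring exactly when $\mathfrak{p}$ is a prime ideal; the paper's version is shorter but leaves the ``since $P$ is prime, $x\in P$'' step implicit. One small caution: your parenthetical alternative via $RxR\subseteq\mathfrak{p}$ is not justified as written, since $RRx=(0)$ does not immediately yield $RxR\subseteq\mathfrak{p}$ in a noncommutative ring without identity; but as you discard that line in favour of the quotient argument, your proof as presented stands.
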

\begin{proof}
	\begin{enumerate}
		\item Let $R$ be a ring and $x\in P_{R}$. From definition of the set $P_{R}$, we have $RRx=(0)$. Since $R$ is prime ring, we obtained $x=0$. Namely, $P_{R}=\{0\}$.
		\item Let $P$ is prime ideal in $R$. If $x\in P_{R}$, then $RRx=(0)\subseteq P$ by the definition of $P_{R}$. Since $P$ is prime, we get $x\in P$. Hence, we get $P_{R}\subseteq P$. This gives us that every prime ideal of $R$ includes $P_{R}$.  
	\end{enumerate}
\end{proof}
We know that the properties of idempotent, nilpotent, and zero-divisor elements in a ring are also related to the primality of that ring. Now let's examine the relationship between the source of primeness and these special elements.
\begin{lemma}\label{lem5}
	Let $R$ be a ring. Then the following holds.
	\begin{enumerate}
		\item If every element in $R$ is idempotent element, then $P_{R}=\{0\}$.
		\item If $a\in P_{R}$, then $a$ is a zero divisor element.
		\item If $R$ has identity element, then $P_{R}=\{0\}$.
	\end{enumerate}
\end{lemma}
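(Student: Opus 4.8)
The plan is to read off all three statements from the description of $P_R$ recorded just before the lemma, namely $P_R = \{x\in R \mid RRx = (0)\}$, together with the fact — supplied by the theorem $P_R(A)\subseteq S_R(A)$ taken with $A=R$ — that every $x\in P_R$ also satisfies $xRx=(0)$. The common computational engine is that, for $x\in P_R$, substituting $r=s=x$ into $RRx=(0)$ yields $x^{3}=0$.

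For part (3), if $R$ has an identity $1$ and $x\in P_R$, then substituting $r=s=1$ into $RRx=(0)$ gives $x = 1\cdot 1\cdot x = 0$, so $P_R=\{0\}$. For part (1), assume every element of $R$ is idempotent and take $x\in P_R$; the remark above gives $x^{3}=0$, while idempotency gives $x^{3}=x^{2}=x$, hence $x=0$ and again $P_R=\{0\}$. (Alternatively, part (1) would follow from part (2), but keeping the argument inline avoids a forward reference.)

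For part (2), let $a\in P_R$. If $a=0$ it is a zero divisor in any nonzero ring, since multiplying it by any nonzero element of $R$ gives $0$. If $a\neq 0$, use $a^{3}=0$ to choose the least integer $n$ (necessarily $n\in\{2,3\}$) with $a^{n}=0$; then $b:=a^{\,n-1}$ is nonzero and $ab = ba = a^{n} = 0$, so $a$ is a zero divisor.

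I do not anticipate any genuine obstacle: each step is a single substitution into $RRx=(0)$ or $xRx=(0)$. The only mild subtlety is in part (2), where one should first extract the nilpotency relation $a^{3}=0$ and only then exhibit the explicit annihilator $a^{\,n-1}$, disposing of the degenerate case $a=0$ (and, strictly, assuming $R\neq\{0\}$) separately.
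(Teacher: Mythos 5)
Your proposal is correct, and parts (1) and (3) coincide with the paper's own arguments: both substitute $r=s=x$ into $RRx=(0)$ to get $x^{3}=0$ for part (1), and $r=s=1_{R}$ for part (3). For part (2) you take a genuinely different and, frankly, cleaner route. The paper argues from $axa=0$ for all $x\in R$ and splits into cases according to whether some product $xa$ (resp.\ $ax$) is nonzero, concluding in each case that $a$ is a left or a right zero divisor; the write-up of that case analysis is somewhat tangled. You instead extract the nilpotency relation $a^{3}=0$ and run the standard argument that a nonzero nilpotent element is a zero divisor: with $n\in\{2,3\}$ minimal such that $a^{n}=0$, the element $b=a^{n-1}$ is nonzero and satisfies $ab=ba=0$. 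This buys you a slightly stronger conclusion (a single witness making $a$ simultaneously a left and a right zero divisor) and also disposes explicitly of the degenerate case $a=0$, which the paper silently excludes by writing $0\neq a\in P_{R}$ even though the statement as given covers $a=0$; under the paper's definition of zero divisor your handling of that case (assuming $R\neq\{0\}$) is the right one. Your appeal to the theorem $P_{R}(A)\subseteq S_{R}(A)$ is harmless but unnecessary, since $x^{3}=0$ already follows directly from $RRx=(0)$.
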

\begin{proof}
	\begin{enumerate}
		\item Let every element in $R$ be idempotent element and $a\in P_{R}$. Since $RRa=(0)$, $aaa=0$. Moreover, $a$ is an idempotent element, we have $a=0$. Then, $P_{R}=\{0\}$.
		\item If $0\neq a\in P_{R}$, then $RRa=(0)$. We have $axa=0$ for all $x\in R$. Namely, $a(xa)=0$. If $ax\neq 0$, then $x$ is a left zero divisor element. Also, since $R\neq (0)$, if $ax=0$  for all $a\in R$, then $xa=0$ for some $x\neq 0$. That means $a$ is a left zero divisor element. Hence, we can see $a$ is a right zero divisor in the same way. Thus, we get that $a$ is a zero divisor element.
		\item Let $R$ has identity element $1_{R}$ and $a\in P_{R}$. Then, we have $RRa=(0)$. In particular, we write $1_{R}1_{R}a=0$. Then, we obtain $P_{R}=\{0\}$.
	\end{enumerate}
\end{proof}
From Lemma \ref{lem5}, it is easy to see that following corollary.

\begin{corollary}
	For any $R$ ring the following is always true.
	\begin{enumerate}
		\item There is no idempotent element other than zero in $P_{R}$.
		\item Every element in $P_{R}$ is nilpotent element.
		\item Every element in $P_{R}$ is zero divisor element.
		
	\end{enumerate}
\end{corollary}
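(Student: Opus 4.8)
The plan is to derive each of the three statements directly from the corresponding parts of Lemma \ref{lem5}, since this Corollary is advertised as an immediate consequence of that lemma. The key observation linking the two is that every element $a \in P_R$ satisfies $RRa = (0)$, and in particular $a \cdot a \cdot a = a^{3} = 0$, so that $a$ is automatically nilpotent; this single fact does most of the work.

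For part (1), I would argue by contradiction or directly: suppose $e \in P_R$ is idempotent, so $e = e^{2}$. Since $e \in P_R$ we have $RRe = (0)$, hence $e\, e\, e = 0$, i.e. $e^{3} = 0$. But $e = e^{2} = e^{3} = 0$ by idempotency, so the only idempotent in $P_R$ is $0$. For part (2), let $a \in P_R$ be arbitrary; as noted, $a^{3} = 0$, so $a$ is nilpotent (of index at most $3$, and of index $1$ only when $a = 0$). Thus every element of $P_R$ is a nilpotent element. For part (3), I would invoke part (2) of Lemma \ref{lem5} directly, which states that every $a \in P_R$ is a zero-divisor element; alternatively one observes that a nilpotent element $a \neq 0$ with $a^{3} = 0$ satisfies $a \cdot a^{2} = 0$ with $a \neq 0$, and if $a^{2} = 0$ then $a \cdot a = 0$, so in either case $a$ annihilates a nonzero element on both sides, making it a two-sided zero-divisor (with the convention that $0$ counts as a zero-divisor, or noting $P_R = \{0\}$ trivially satisfies the claim when $P_R$ is trivial).

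I do not anticipate a genuine obstacle here, since all three assertions are restatements or specializations of facts already proved. The only point requiring a little care is the handling of the element $0$ itself in parts (2) and (3): whether one regards $0$ as a nilpotent element and as a zero-divisor depends on conventions, but the intended reading is clearly ``every nonzero element of $P_R$ is nilpotent and a zero-divisor,'' or equivalently the statement is vacuous/trivial when $P_R = \{0\}$. I would simply mirror the phrasing used in Lemma \ref{lem5} to stay consistent. The proof therefore reduces to citing Lemma \ref{lem5}(1)--(3) and spelling out the one-line computation $a \in P_R \implies a^{3} = 0$ that underlies the nilpotency claim.
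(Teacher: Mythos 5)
Your proposal is correct and matches the paper's intent: the paper gives no separate proof, stating only that the corollary is easy to see from Lemma \ref{lem5}, and the computations you supply ($a\in P_{R}\Rightarrow RRa=(0)\Rightarrow a^{3}=0$, idempotency forcing $e=e^{3}=0$, and citing the lemma's zero-divisor argument) are exactly the ones underlying that lemma. No gaps; this is the same route, just written out explicitly.
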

\begin{proposition}
	Let $R$ and $T$ be two rings and $f: R\rightarrow T$ a ring homomorphism. So, $f(P_{R})\subseteq P_{f(R)}$. If $f$ is injective, then $f(P_{R})= P_{f(R)}$.
\end{proposition}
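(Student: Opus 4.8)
The plan is to work throughout with the characterization noted just before this proposition: for any ring $S$ one has $P_{S} = \{x \in S \mid SSx = (0)\}$, i.e. $x \in P_{S}$ exactly when $s_{1}s_{2}x = 0$ for all $s_{1}, s_{2} \in S$. Applying this to $S = f(R)$ (a subring of $T$), membership $z \in P_{f(R)}$ means $z \in f(R)$ together with $f(R)f(R)z = (0)$. Since $f$ maps $R$ onto $f(R)$, every element of $f(R)$ has the form $f(a)$ with $a \in R$, so both inclusions will reduce to pushing the defining relation $RRx = (0)$ through $f$ and, for the converse, pulling it back.

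For the inclusion $f(P_{R}) \subseteq P_{f(R)}$: let $x \in P_{R}$ and put $y = f(x) \in f(R)$. Given arbitrary elements of $f(R)$, write them as $f(a), f(b)$ with $a, b \in R$. Because $f$ is a ring homomorphism, $f(a)f(b)y = f(a)f(b)f(x) = f(abx)$, and $abx = 0$ since $x \in P_{R}$; hence $f(a)f(b)y = 0$. As $a, b$ were arbitrary, $f(R)f(R)y = (0)$, so $y \in P_{f(R)}$.

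Now assume $f$ is injective; it remains to prove $P_{f(R)} \subseteq f(P_{R})$. Let $z \in P_{f(R)}$. Then $z \in f(R)$, so $z = f(x)$ for a (unique) $x \in R$. For all $a, b \in R$ we get $f(abx) = f(a)f(b)f(x) = f(a)f(b)z = 0$ because $z \in P_{f(R)}$; injectivity of $f$ then yields $abx = 0$. Thus $RRx = (0)$, i.e. $x \in P_{R}$, and therefore $z = f(x) \in f(P_{R})$. Combined with the first inclusion this gives $f(P_{R}) = P_{f(R)}$.

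There is no real obstacle here: the argument is a routine transport of the single defining identity $RRx=(0)$ along $f$. The only points deserving care are, first, reading $P_{f(R)}$ as the source of primeness of the subring $f(R)$ (not as $P_{T}(f(R))$), so that the relevant products range over $f(R)$ rather than over all of $T$; and second, noticing the asymmetric role of the two hypotheses --- surjectivity of $f$ onto $f(R)$ is what lets us name elements of $f(R)$ as images and is automatic, while injectivity is used solely to cancel $f$ in the implication $f(abx)=0 \Rightarrow abx=0$ needed for the reverse inclusion.
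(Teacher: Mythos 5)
Your proof is correct and follows essentially the same route as the paper's: both directions transport the defining relation $RRx=(0)$ through $f$ via $f(R)f(R)f(x)=f(RRx)$, with injectivity used only to cancel $f$ in the reverse inclusion. You merely write the argument elementwise where the paper uses set-level notation, and your explicit remark that $P_{f(R)}$ is the source of primeness of the subring $f(R)$ (so products range over $f(R)$, not $T$) matches the paper's reading.
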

\begin{proof}
	Let $x\in f(P_{R})$. In this case, there is $a\in P_{R}$ such that $x=f(a)$. Namely, $RRa=(0)$. Since $$(0)=f(RRa)=f(R)f(R)f(a),$$ we get $f(a)\in P_{f(R)}$ and so $x\in P_{f(R)}$. Thence, $f(P_{R})\subseteq P_{f(R)}$.
	
	Now let's take $y\in P_{f(R)}$. From the set definition, $f(R)f(R)y=(0)$ for $y\in f(R)$. Since $y\in f(R)$, we have $y=f(r)$ for $r\in R$. Hence, $f(R)f(R)f(r)=(0)$ and since $f$ is a homomorphism $f(RRr)=(0)$ is satisfied. Using $f$ is injective, we obtain $RRr=(0)$ for $r\in R$. So, $r\in P_{R}$. From here, $y=f(r)\in f(P_{R})$ is obtained. Thus, $P_{f(R)}\subseteq f(P_{R})$.
\end{proof}

\section{Conclusion}

By adding the definition of the source of the primeness to the literature, we investigated the relationships between the source of the primeness and the prime ring. However, we have given some results of the source set of primeness. In addition, the relationships between the inverse and completely regular elements. Apart from that, we gave the results of different theorems and showed these theorems on examples.

\section{Data Availability}
The data used to support the findings of this study are available from the corresponding author upon request.

\section{Author Contributions}
The YEŞİL posed the problem and obtained 80 percent of the data contained in the article. She is also responsible for uploading the manuscript to the journal and making corrections. Camci checked the data obtained by the first author. Also, she has achieved the some results.  They all read and approved the last version of the manuscript.

\section{Conflicts of Interest}
The authors declare no conflict of interest.


\end{document}